\providecommand{\U}[1]{\protect\rule{.1in}{.1in}}
\newtheorem{theorem}{Theorem}
\theoremstyle{plain}
\newtheorem{lemma}{Lemma}
\numberwithin{equation}{section}
\begin{document}
\title[ ]{Approximation by a generalization of the Jakimovski -Leviatan operators }
\author{Didem AYDIN\ ARI}
\address{K\i r\i kkale University }
\email{didemaydn@hotmail.com}
\urladdr{}
\author{Sevilay KIRCI\ SERENBAY}
\curraddr{Ba\c{s}kent University}
\email{sevilaykirci@gmail.com}
\urladdr{}
\thanks{}
\thanks{}
\thanks{This paper is in final form and no version of it will be submitted for
publication elsewhere.}
\date{2015}
\subjclass[2000]{ }
\keywords{Jakimovski-Leviatan operator, Lipschitz class, weighted modulus of continuity,
weighted spaces,rate of convergence}

\begin{abstract}
In this paper, we introduce a Kantorovich type generalization of
\ Jakimovski-Leviatan operators constructed by A. Jakimovski and D. Leviatan
(1969) and the theorems on convergence and the degrree of convergence are
established. Furthermore, we study the convergence of these operators in a
weighted space of functions on a positive semi-axis.

\end{abstract}
\maketitle

\section{Introduction}

\bigskip

In approximation theory, Sz\`{a}sz type operators and Chlodowsky type
generalizations of these operators have been studied intensively [see
\cite{jaki}, \cite{C1}, \cite{nurh}, \cite{ibrahim}, \cite{gulen},
\cite{agar}, \cite{serhan}, \cite{chlo}, \cite{didem} and many others]. Also
orthogonal polynomials are important area of mathematical analysis,
mathematical and theoretical physics. In mathematical analysis and in the
positive approximation processes, the notion of orthogonal polynomials
seldomly appears. Cheney and Sharma \cite{Cheney} established an operator
\begin{equation}
P_{n}(f;x)=\left(  1-x\right)  ^{n+1}\exp\left(  \frac{tx}{1-x}\right)
%TCIMACRO{\dsum \limits_{k=0}^{\infty}}%
%BeginExpansion
{\displaystyle\sum\limits_{k=0}^{\infty}}
%EndExpansion
f\left(  \frac{k}{k+n}\right)  L_{k}^{\left(  n\right)  }(t)x^{k}%
\end{equation}
where $t\leq0$ and $L_{k}^{\left(  n\right)  }$ denotes the Laguerre
polynomials. For the special case $t=0$, the operators given by (\ref{1.1})
reduce to the well-known Meyer-K\"{o}nig and Zeller operators \cite{Mkz}.

In view of the relation between orthogonal polynomials and positive linear
operators have been investigated by many researchers (see \cite{serhan}%
,\cite{didem}).

One of them is Jakimovski and Leviatan 's study. In 1969, authors introduced
Favard-Sz\`{a}sz type operators $P_{n},$ by using Appell polynomials are given with

$g(u)=\sum\limits_{n=0}^{\infty}a_{n}u^{n},g(1)\neq1$ be an analytic function
in the disk $\left\vert u<r\right\vert $ $(r>1)$ and $p_{k}(x)=\sum
\limits_{i=0}^{k}a_{i}\frac{x^{k-i}}{(k-i)!}$, $(k\in%
%TCIMACRO{\U{2115} }%
%BeginExpansion
\mathbb{N}
%EndExpansion
)$ be the Appell polynomials defined by the identity%
\begin{equation}
g(u)e^{ux}\equiv\sum\limits_{k=0}^{\infty}p_{k}(x)u^{k}.\label{1.1}%
\end{equation}

\bigskip Let $E\left[  0,\infty\right)  $ denote the space of exponential type
functions on $\left[  0,\infty\right)  $ which satisfy the property $\ $
$\left\vert f(x)\right\vert \leq\beta e^{\alpha x}$ for some finite constants
$\alpha,\beta>0.$

In \cite{jaki}, the authors considered the operator $P_{n}$, with%
\begin{equation}
P_{n}(f;x)=\frac{e^{-nx}}{g(1)}\sum\limits_{k=0}^{\infty}p_{k}(nx)f(\frac
{k}{n})\label{1.2}%
\end{equation}
for $f\in E\left[  0,\infty\right)  $ and studied approximation properties of
these operators, as well as the analogue to Sz\`{a}sz's results.

\bigskip If $g(u)\equiv1,$from (\ref{1.1}) we obtain $p_{k}(x)=\dfrac{x^{k}%
}{k!}$and we obtain classical Sz\`{a}sz-Mirakjan operator which is given by%

\[
S_{n}(f;x)=e^{-nx}%
%TCIMACRO{\dsum \limits_{k=0}^{\infty}}%
%BeginExpansion
{\displaystyle\sum\limits_{k=0}^{\infty}}
%EndExpansion
\dfrac{(nx)^{k}}{k!}f(\frac{k}{n})\text{.}%
\]

In 1969, Wood \cite{wood} showed that the operators $P_{n}$ are positive if
and only if $\frac{a_{k}}{g(1)}\geq0,$ $(k=0,1,...).$In 1996, Ciupa \cite{C1}
was studied the rate of convergence of these operators. In 1999, Abel and Ivan
\cite{Abel} showed an asimptotic expansion of the operators given by
(\ref{1.2}) and their derivatives. In 2003, \.{I}spir \cite{nurh} showed that
the approximation of continuous functions having polynomial growth at infinity
by the operator in (\ref{1.2}). In 2007, Ciupa \cite{C2} defined Modified
Jakimovski-Leviatan operators and studied rate of convergence,order of
approximation and Voronovskaya type theorem. Recently, B\"{u}y\"{u}kyaz\i
c\i\ and et. al, \cite{ibrahim} studied approximation properties of Chlodowsky
type Jakimovski -Leviatan operators. They proved Voronovskaya-type theorem and
studied the convergence of these operators in a weighted space by using a new
type of weighted modulus of continuity.

In this paper, we consider the following Kantorovich type generalization of
Jakimovski-Leviatan operators given by
\begin{equation}
L_{n}^{\ast}(f;x)=\frac{e^{-nx}}{g(1)}\frac{n}{b_{n}}\sum\limits_{k=0}%
^{\infty}p_{k}(\frac{n}{b_{n}}x)\int\limits_{\frac{k}{n}b_{n}}^{\frac{k+1}%
{n}b_{n}}f(t)dx\label{1.3}%
\end{equation}
with $b_{n}$ a positive increasing sequence with the properties%
\begin{equation}
\underset{n\rightarrow\infty}{\lim}b_{n}=\infty,\text{ }\underset{n\rightarrow
\infty}{\lim}\frac{b_{n}}{n}=0\label{limbn}%
\end{equation}
and $p_{k}$ are Appell polynomials defined by (\ref{1.1}). Recently, some
generalizations of (\ref{1.2}) operators have been considered in
\cite{C1},\cite{C2} and \cite{Alto}.

\section{Some approximation properties of $L_{n}^{\ast}(f;x)$}

In approximation theory, the positive approximation processes discovered by
Korovkin play a central role and arise\ in a natural way in many problems
connected with functional analysis, harmonic analysis, measure theory, partial
differential equations and probability theory.

Let $C_{E}\left[  0,\infty\right)  $ denote the set of all continuous
functions $f$ on $\left[  0,\infty\right)  $ with the property that
$\left\vert f(x)\right\vert \leq\beta e^{\alpha x}$ for all $x\geq0$ and some
positive finite $\alpha$ and $\beta$. For a fixed $r\in%
%TCIMACRO{\U{2115} }%
%BeginExpansion
\mathbb{N}
%EndExpansion
,$ we denote by%
\[
C_{E}^{r}\left[  0,\infty\right)  =\left\{  f\in C_{E}\left[  0,\infty\right)
:f^{\prime},f^{\prime\prime},...f^{\left(  r\right)  }\in C_{E}\left[
0,\infty\right)  \right\}  .
\]

\begin{lemma}
The operators $L_{n}^{\ast}(f;x)$ defined by (\ref{1.3}) satisfy the following equalities.
\end{lemma}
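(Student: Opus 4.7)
My plan is to treat this as a straightforward moment computation for the Kantorovich-type operator, resting entirely on the Appell-polynomial generating-function identity \eqref{1.1}. The lemma is surely going to list the images of the test functions $1,\, t,\, t^{2}$ (and possibly their central-moment counterparts $(t-x),\,(t-x)^{2}$), since these are what the Korovkin-type convergence and rate-of-convergence arguments in the sequel will require.

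The first step is to extract three summation identities from \eqref{1.1}. Setting $u=1$ yields
\[
\sum_{k=0}^{\infty} p_{k}(y)=g(1)\,e^{y}.
\]
Differentiating \eqref{1.1} in $u$ once and twice, then again evaluating at $u=1$, gives
\[
\sum_{k=0}^{\infty} k\, p_{k}(y)=\bigl(g'(1)+y\,g(1)\bigr)e^{y},
\qquad
\sum_{k=0}^{\infty} k(k-1)\, p_{k}(y)=\bigl(g''(1)+2y\,g'(1)+y^{2}g(1)\bigr)e^{y},
\]
and hence $\sum k^{2}p_{k}(y)$ by adding the last two. I will apply these with the rescaled argument $y=\tfrac{n}{b_{n}}x$, since that is what appears inside $p_{k}$ in \eqref{1.3}.

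The second step is the Kantorovich integration. For each $k$ a direct antiderivative gives
\[
\int_{\tfrac{k}{n}b_{n}}^{\tfrac{k+1}{n}b_{n}}\!dt=\frac{b_{n}}{n},\qquad
\int_{\tfrac{k}{n}b_{n}}^{\tfrac{k+1}{n}b_{n}}\!t\,dt=\frac{b_{n}^{2}}{n^{2}}\Bigl(k+\tfrac{1}{2}\Bigr),\qquad
\int_{\tfrac{k}{n}b_{n}}^{\tfrac{k+1}{n}b_{n}}\!t^{2}\,dt=\frac{b_{n}^{3}}{3n^{3}}\bigl(3k^{2}+3k+1\bigr).
\]
Substituting each of these into \eqref{1.3}, pulling constants out of the sum, and invoking the three generating-function identities of Step~1 (with $y=\tfrac{n}{b_{n}}x$) collapses the infinite series. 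The prefactor $\tfrac{e^{-(n/b_{n})x}}{g(1)}\tfrac{n}{b_{n}}$ is chosen precisely so that the $e^{y}$ and $g(1)$ that come out of the sums cancel, leaving clean polynomial expressions in $x$ with coefficients that are simple rational functions of $b_{n}/n$ and of $g(1),g'(1),g''(1)$.

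The main obstacle is purely bookkeeping: keeping the $n$, $b_{n}$, and $g$-derivative factors aligned when collecting the three contributions $3k^{2}$, $3k$, $1$ inside the second-moment sum. I expect the final expressions to be
\[
L_{n}^{\ast}(1;x)=1,\qquad
L_{n}^{\ast}(t;x)=x+\frac{b_{n}}{n}\!\left[\frac{g'(1)}{g(1)}+\frac{1}{2}\right],
\]
\[
L_{n}^{\ast}(t^{2};x)=x^{2}+\frac{2b_{n}}{n}\!\left[\frac{g'(1)}{g(1)}+1\right]\!x+\frac{b_{n}^{2}}{n^{2}}\!\left[\frac{g''(1)+2g'(1)}{g(1)}+\frac{1}{3}\right].
\]
Once these are in hand, the central moments $L_{n}^{\ast}(t-x;x)$ and $L_{n}^{\ast}((t-x)^{2};x)$ follow immediately by linearity, and the hypothesis \eqref{limbn} guarantees that both tend to $0$ as $n\to\infty$, which is exactly what will be needed to apply a Korovkin-type theorem in the next results.
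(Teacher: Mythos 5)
Your route is the standard direct moment computation, and since the paper states this lemma without giving any proof there is no competing argument to compare it with: evaluating (\ref{1.1}) and its first two $u$-derivatives at $u=1$ to get $\sum_{k}p_{k}(y)=g(1)e^{y}$, $\sum_{k}kp_{k}(y)=\bigl(g'(1)+yg(1)\bigr)e^{y}$, $\sum_{k}k(k-1)p_{k}(y)=\bigl(g''(1)+2yg'(1)+y^{2}g(1)\bigr)e^{y}$, combining these with the elementary integrals over $[\tfrac{k}{n}b_{n},\tfrac{k+1}{n}b_{n}]$, and letting the normalization cancel $g(1)e^{y}$ is exactly what is needed. Your value of $L_{n}^{\ast}(1;x)$ matches (\ref{1.4}), and your $L_{n}^{\ast}(t^{2};x)$ matches (\ref{1.7}) after noting $\frac{g(1)+2g'(1)}{g(1)}+1=2\bigl(\frac{g'(1)}{g(1)}+1\bigr)$. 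Two caveats. First, you tacitly replaced the factor $e^{-nx}$ printed in (\ref{1.3}) by $e^{-(n/b_{n})x}$; this correction is forced if $L_{n}^{\ast}(1;x)=1$ is to hold (otherwise the computation leaves $e^{(n/b_{n}-n)x}$), but you should state it explicitly rather than leave it implicit. Second, your first moment $L_{n}^{\ast}(t;x)=x+\frac{b_{n}}{n}\frac{g'(1)}{g(1)}+\frac{b_{n}}{2n}$ differs from the paper's (\ref{1.5}), which has $\frac{b_{n}}{n}$ where you have $\frac{b_{n}}{2n}$. Your constant is the one the computation actually yields, since $\frac{n}{b_{n}}\int_{\frac{k}{n}b_{n}}^{\frac{k+1}{n}b_{n}}t\,dt=\frac{b_{n}}{n}\bigl(k+\frac{1}{2}\bigr)$; so this is a discrepancy with the printed statement, not a gap in your derivation (the paper's moment formulas are not internally consistent in any case: (\ref{1.5}) and (\ref{1.7}) as printed would give $\frac{1}{3}$ rather than $1$ in the bracket of (\ref{1.8})). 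Worth recording as a consequence: with the corrected first moment the second central moment is $L_{n}^{\ast}((t-x)^{2};x)=\frac{b_{n}}{n}x+\frac{b_{n}^{2}}{n^{2}}\bigl(\frac{g''(1)+2g'(1)}{g(1)}+\frac{1}{3}\bigr)$, which carries an $x$-dependent term of order $\frac{b_{n}}{n}$; it still tends to $0$ uniformly on $[0,a]$ by (\ref{limbn}), so the Korovkin conclusion you anticipate goes through, but it is not $O(b_{n}^{2}/n^{2})$ as (\ref{1.8}) asserts.
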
%

\begin{equation}
L_{n}^{\ast}(1;x)=1,\label{1.4}%
\end{equation}

\begin{equation}
L_{n}^{\ast}(t;x)=x+\frac{g^{\prime}(1)}{g(1)}\frac{b_{n}}{n}+\frac{b_{n}}%
{n},\label{1.5}%
\end{equation}

\begin{equation}
L_{n}^{\ast}(t^{2};x)=x^{2}+\frac{b_{n}}{n}x\left(  \frac{g(1)+2g^{\prime}%
(1)}{g(1)}+1\right)  +\frac{b_{n}^{2}}{n^{2}}\left(  \frac{2g^{\prime}%
(1)}{g(1)}+\frac{g^{\prime\prime}(1)}{g(1)}+\frac{1}{3}\right)  .\label{1.7}%
\end{equation}

\begin{lemma}
\bigskip The central moments of the operators $L_{n}^{\ast}(f;x)$ are given by%
\[
L_{n}^{\ast}(t-x;x)=\frac{g^{\prime}(1)}{g(1)}\frac{b_{n}}{n}+\frac{b_{n}}{n}%
\]

\end{lemma}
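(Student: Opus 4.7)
The plan is to deduce this central moment identity directly from the previous lemma by exploiting the linearity of the operator $L_n^{\ast}$. Since $L_n^{\ast}$ is defined via a sum of integrals weighted by the nonnegative coefficients $p_k(\tfrac{n}{b_n}x)/g(1)$, it is a linear operator in its first argument, so for any fixed $x$ we have
\begin{equation*}
L_n^{\ast}(t-x;x) \;=\; L_n^{\ast}(t;x) \;-\; x\, L_n^{\ast}(1;x).
\end{equation*}

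The first step, therefore, is simply to invoke identities \eqref{1.4} and \eqref{1.5} from the preceding lemma. Substituting $L_n^{\ast}(1;x)=1$ and $L_n^{\ast}(t;x)=x+\tfrac{g'(1)}{g(1)}\tfrac{b_n}{n}+\tfrac{b_n}{n}$ into the linear combination above, the $x$ terms cancel and leave exactly the claimed expression
\begin{equation*}
L_n^{\ast}(t-x;x) \;=\; \frac{g'(1)}{g(1)}\frac{b_n}{n} + \frac{b_n}{n}.
\end{equation*}

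There is no real obstacle here since the identity is an immediate algebraic consequence of the first moment formula; the only point worth noting is that linearity of $L_n^{\ast}$ in its first argument must be justified, but this is clear from the definition \eqref{1.3}, where the integral $\int_{kb_n/n}^{(k+1)b_n/n} f(t)\,dt$ is linear in $f$ and the sum preserves this linearity. Thus the lemma reduces to one line once \eqref{1.4} and \eqref{1.5} are available, and no analytic estimation or additional property of the Appell polynomials $p_k$ needs to be invoked beyond what was already used to establish the previous lemma.
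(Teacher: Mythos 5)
Your proposal is correct and follows exactly the route the paper implicitly takes: the lemma is an immediate consequence of linearity together with the identities \eqref{1.4} and \eqref{1.5}, namely $L_{n}^{\ast}(t-x;x)=L_{n}^{\ast}(t;x)-x\,L_{n}^{\ast}(1;x)$. No further comment is needed.
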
%

\begin{equation}
L_{n}^{\ast}((t-x)^{2};x)=\frac{b_{n}^{2}}{n^{2}}\left(  \frac{2g^{\prime}%
(1)}{g(1)}+\frac{g^{\prime\prime}(1)}{g(1)}+1\right) \label{1.8}%
\end{equation}

\begin{theorem}
If $f\in C_{E}\left[  0,\infty\right)  ,$ then $\lim L_{n}^{\ast}(f)=f$
uniformly on $\left[  0,a\right]  $.

\begin{proof}
From (\ref{1.4})-(\ref{1.7}), we have
\[
\lim L_{n}^{\ast}(e_{i};x)=e_{i}\left(  x\right)  ,\text{ \ }i\in\left\{
0,1,2\right\}  ,
\]
where $e_{i}\left(  x\right)  =t^{i}$ . Applying the Korovkin theorem
\cite{Gad2}, we obtain the desired result.
\end{proof}
\end{theorem}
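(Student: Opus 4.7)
The plan is to apply the Korovkin–Bohman theorem (in Gadjiev's form, suited for operators on $[0,\infty)$ acting on functions of exponential growth) with the canonical test family $e_0=1$, $e_1=t$, $e_2=t^2$. The work has already been done in Lemma 1, so the remaining task is to deduce uniform convergence of $L_n^*(e_i;x)$ to $e_i(x)$ on $[0,a]$ and then cite the theorem.

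First I would note positivity: since $a_k/g(1)\ge 0$ (the condition of Wood recalled in the introduction), the Appell polynomials $p_k$ are non-negative on $[0,\infty)$, so $L_n^*$ is a positive linear operator. Next, using the three identities \eqref{1.4}, \eqref{1.5}, \eqref{1.7} together with the hypothesis $b_n/n\to 0$, I would bound the defects on $[0,a]$:
\[
\bigl|L_n^*(1;x)-1\bigr|=0,\qquad \bigl|L_n^*(t;x)-x\bigr|=\frac{b_n}{n}\left(\frac{g'(1)}{g(1)}+1\right),
\]
\[
\bigl|L_n^*(t^2;x)-x^2\bigr|\le a\,\frac{b_n}{n}\left(\frac{g(1)+2g'(1)}{g(1)}+1\right)+\frac{b_n^2}{n^2}\left(\frac{2g'(1)}{g(1)}+\frac{g''(1)}{g(1)}+\frac13\right).
\]
Each right-hand side is independent of $x$ and tends to $0$, giving uniform convergence on $[0,a]$ for $i=0,1,2$.

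Finally, I would invoke the Korovkin-type theorem of Gadjiev \cite{Gad2}: for any $f\in C_E[0,\infty)$ (so in particular $f$ is continuous on the compact interval $[0,a]$ and of exponential growth, which is what the Gadjiev variant requires in order to control the operators on the unbounded tail), the convergence $L_n^*(e_i)\to e_i$ on $[0,a]$ for $i\in\{0,1,2\}$ forces $L_n^*(f)\to f$ uniformly on $[0,a]$.

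The only subtle point, and thus the main obstacle, is that $L_n^*(f;x)$ involves integrals of $f$ over intervals $[\tfrac{k}{n}b_n,\tfrac{k+1}{n}b_n]$ that leave $[0,a]$, so the classical Korovkin theorem on $C[0,a]$ does not directly apply; the exponential-growth hypothesis on $f$, together with positivity of $L_n^*$ and the second moment estimate \eqref{1.8}, is exactly what is needed to control this tail contribution and legitimize Gadjiev's extension in this setting. I would not grind through this tail estimate, simply citing \cite{Gad2} as done.
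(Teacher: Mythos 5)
Your proposal follows the same route as the paper: verify the Korovkin test functions $e_0,e_1,e_2$ via the moment formulas \eqref{1.4}--\eqref{1.7}, observe the defects vanish uniformly on $[0,a]$ because $b_n/n\to 0$, and invoke Gadjiev's Korovkin-type theorem \cite{Gad2}. Your added remarks on positivity (Wood's condition) and on why the exponential-growth framework is needed to handle the mass outside $[0,a]$ are sensible refinements, but the argument is essentially the paper's.
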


In this section, we deal with the rate of convergence of the $L_{n}^{\ast
}(f;x)$ to $f$ by means of a classical approach, the second modulus of
continuity, and Peetre's K-functional.

Let $f\in\widetilde{C}[0,\infty).$ If $\delta>0$, the modulus of continuity of
$f$ is defined by%
\[
\omega(f,\delta)=\sup_{\underset{\left\vert x-y\right\vert \leq\delta
}{x,y\in\lbrack0,\infty)}}\left\vert f(x)-f(y)\right\vert ,
\]
where $\widetilde{C}[0,\infty)$ denotes the space of uniformly continuous
functions on $[0,\infty)$ . It is also well known that, for any $\delta>0$ and
each $x\in\lbrack0,\infty)$ ,%
\[
\left\vert f(x)-f(y)\right\vert \leq\omega(f,\delta)\left(  \frac{\left\vert
x-y\right\vert }{\delta}+1\right)  .
\]
The next result gives the rate of convergence of the sequence $L_{n}^{\ast
}(f;x)$ to $f$ \ by means of the modulus of continuity.

\begin{theorem}
If $f\in C_{E}\left[  0,\infty\right)  ,$ then for any $x\in\lbrack0,a]$ we
have
\[
\left\vert L_{n}^{\ast}(f;x)-f\left(  x\right)  \right\vert \leq\left\{
1+\frac{1}{\delta}\left(  \sqrt{\theta_{n}}\right)  \right\}  \omega
(f,\delta).
\]
where%
\[
\theta_{n}=\frac{b_{n}^{2}}{n^{2}}\left(  \frac{2g^{\prime}(1)}{g(1)}%
+\frac{g^{\prime\prime}(1)}{g(1)}+1\right)  .
\]

\end{theorem}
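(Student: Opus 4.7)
The plan is to invoke the classical modulus-of-continuity argument for positive linear operators that reproduce constants. Since (\ref{1.4}) gives $L_{n}^{\ast}(1;x)=1$, I would first rewrite
\[
L_{n}^{\ast}(f;x) - f(x) = L_{n}^{\ast}\bigl(f(t)-f(x);\,x\bigr),
\]
and then substitute, inside the operator, the pointwise bound $|f(t)-f(x)|\le \omega(f,\delta)\bigl(|t-x|/\delta + 1\bigr)$ stated just above the theorem. Using linearity and positivity of $L_{n}^{\ast}$ together with $L_n^{\ast}(1;x)=1$, this yields
\[
|L_{n}^{\ast}(f;x) - f(x)| \le \omega(f,\delta)\left(1 + \frac{1}{\delta}\, L_{n}^{\ast}(|t-x|;\,x)\right).
\]

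Next, I would estimate the first absolute central moment by the second one via the Cauchy--Schwarz inequality applied to the positive linear functional $L_{n}^{\ast}(\,\cdot\,;x)$:
\[
L_{n}^{\ast}(|t-x|;\,x) \le \sqrt{L_{n}^{\ast}\bigl((t-x)^{2};\,x\bigr)}\,\sqrt{L_{n}^{\ast}(1;\,x)} = \sqrt{\theta_{n}},
\]
where the final equality uses (\ref{1.4}) and the fact that the expression in (\ref{1.8}) is exactly $\theta_{n}$. Substituting this into the previous display produces the claimed inequality.

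All of this is a template computation; the genuine work has already been carried out in Lemmas~1 and~2, where the relevant moments of $L_{n}^{\ast}$ were computed. The only step that requires any thought is the Cauchy--Schwarz bound for the positive linear functional $L_{n}^{\ast}(\,\cdot\,;x)$, which is entirely standard, so I do not anticipate any real obstacle. A choice such as $\delta=\sqrt{\theta_{n}}$ would in fact make the bracketed factor equal to $2$, but the statement is left in the more flexible form above.
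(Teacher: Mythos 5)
Your proof is correct and follows essentially the same route as the paper: insert the modulus-of-continuity bound $|f(t)-f(x)|\le\omega(f,\delta)\left(|t-x|/\delta+1\right)$ inside the operator, use $L_{n}^{\ast}(1;x)=1$, and control $L_{n}^{\ast}(|t-x|;x)$ by $\sqrt{L_{n}^{\ast}((t-x)^{2};x)}=\sqrt{\theta_{n}}$ via the second central moment of Lemma 2. The only cosmetic difference is that you apply Cauchy--Schwarz once, abstractly, to the positive linear functional, while the paper applies it in two steps (first to each integral over $[\frac{k}{n}b_{n},\frac{k+1}{n}b_{n}]$, then to the sum), ending at the same bound.
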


\begin{proof}%
\begin{align*}
\left\vert L_{n}^{\ast}(f;x)-f\left(  x\right)  \right\vert  & \leq
\frac{e^{-nx}}{g(1)}\frac{n}{b_{n}}\sum\limits_{k=0}^{\infty}p_{k}(\frac
{n}{b_{n}}x)\int\limits_{\frac{k}{n}b_{n}}^{\frac{k+1}{n}b_{n}}\left\vert
f(s)-f\left(  x\right)  \right\vert ds\\
& \leq\frac{e^{-nx}}{g(1)}\frac{n}{b_{n}}\sum\limits_{k=0}^{\infty}p_{k}%
(\frac{n}{b_{n}}x)\int\limits_{\frac{k}{n}b_{n}}^{\frac{k+1}{n}b_{n}}%
\omega(f,\delta)\left(  \frac{\left\vert s-x\right\vert }{\delta}+1\right)
ds\\
& \leq\left\{  1+\frac{e^{-nx}}{g(1)}\frac{n}{b_{n}}\sum\limits_{k=0}^{\infty
}p_{k}(\frac{n}{b_{n}}x)\int\limits_{\frac{k}{n}b_{n}}^{\frac{k+1}{n}b_{n}%
}\left\vert s-x\right\vert ds\right\}  \omega(f,\delta)
\end{align*}

By using the Cauchy-Schwarz inequality for integration, we get%
\[
\int\limits_{\frac{k}{n}b_{n}}^{\frac{k+1}{n}b_{n}}\left\vert s-x\right\vert
ds\leq\frac{1}{\sqrt{n}}\left(  \int\limits_{\frac{k}{n}b_{n}}^{\frac{k+1}%
{n}b_{n}}\left\vert s-x\right\vert ^{2}ds\right)  ^{1/2}%
\]
which holds that%
\[
\sum\limits_{k=0}^{\infty}p_{k}(\frac{n}{b_{n}}x)\int\limits_{\frac{k}{n}%
b_{n}}^{\frac{k+1}{n}b_{n}}\left\vert s-x\right\vert ds\leq\frac{1}{\sqrt{n}%
}\sum\limits_{k=0}^{\infty}p_{k}(\frac{n}{b_{n}}x)\left(  \int\limits_{\frac
{k}{n}b_{n}}^{\frac{k+1}{n}b_{n}}\left\vert s-x\right\vert ^{2}ds\right)
^{1/2}.
\]
If we apply the Cauchy-Schwarz inequality, we get%
\begin{align*}
\left\vert L_{n}^{\ast}(f;x)-f\left(  x\right)  \right\vert  & \leq\left\{
1+\frac{1}{\delta}\left(  \frac{e^{-nx}}{g(1)}\frac{n}{b_{n}}\sum
\limits_{k=0}^{\infty}p_{k}(\frac{n}{b_{n}}x)\int\limits_{\frac{k}{n}b_{n}%
}^{\frac{k+1}{n}b_{n}}\left\vert s-x\right\vert ^{2}ds\right)  \right\}
\omega(f,\delta)\\
& =\left\{  1+\frac{1}{\delta}\left(  \sqrt{L_{n}^{\ast}((s-x)^{2};x)}\right)
\right\}  \omega(f,\delta)\\
& \leq\left\{  1+\frac{1}{\delta}\left(  \sqrt{\theta_{n}}\right)  \right\}
\omega(f,\delta).
\end{align*}
Now if we choose $\delta=\sqrt{\theta_{n}},$ it completes the proof.
\end{proof}

Now we remember the second modulus of continuity of $f\in C_{B}[0,\infty$ $) $
which is defined by%
\[
\omega_{2}\left(  f;\delta\right)  =\sup_{0<t<\delta}\left\Vert f\left(
.+2t\right)  -2f\left(  .+t\right)  +f\left(  .\right)  \right\Vert
_{C_{B}^{^{\prime}}}%
\]
where $C_{B}[0,\infty$ ) is the class of real valued functions defined on
$[0,\infty$ $)$ which are bounded and uniformly continuous with the norm
$\left\Vert f\right\Vert _{C_{B}}=\sup_{x\in\lbrack0,\infty)}\left\vert
f\left(  x\right)  \right\vert .$

Peetre's $K-$functional of the function $f\in C_{B}[0,\infty$ $)$ is defined
by%
\begin{equation}
K(f;\delta)=\inf\left\{  \left\Vert f-g\right\Vert _{C_{B}}+\delta\left\Vert
g\right\Vert _{C_{B}^{2}}\right\}  ,\label{Petree}%
\end{equation}
where
\[
C_{B}^{2}[0,\infty)=\left\{  g\in C_{B}[0,\infty):g^{^{\prime}},g^{^{\prime
\prime}}\in C_{B}[0,\infty)\right\}  ,
\]
and the norm $\left\Vert g\right\Vert _{C_{B}^{2}}=\left\Vert g\right\Vert
_{C_{B}^{2}}+\left\Vert g^{^{\prime}}\right\Vert _{C_{B}}+\left\Vert
g^{^{\prime\prime}}\right\Vert _{C_{B}}$. \ The following inequality
\begin{equation}
K(f;\delta)\leq M\left\{  \omega_{2}\left(  f;\sqrt{\delta}\right)
+\min\left(  1,\delta\right)  \left\Vert f\right\Vert _{C_{B}}\right\}
,\label{W2}%
\end{equation}
holds for all $\delta>0$ and the constant $M$ is independent of $f$ and
$\delta$.

\begin{theorem}
If $f\in$ $C_{B}^{2}[0,\infty)$, then we have%
\[
\left\vert L_{n}^{\ast}(f;x)-f\left(  x\right)  \right\vert \leq\xi\left\Vert
f\right\Vert _{C_{B}^{2}},
\]
where
\end{theorem}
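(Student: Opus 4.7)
The plan is to use a Taylor expansion of $f\in C_B^2[0,\infty)$ about the point $x$ and apply the linear operator $L_n^*$ term by term, leveraging the moment identities already established in Lemma 2.

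More precisely, I would start from the integral form of Taylor's theorem,
\[
f(t) \;=\; f(x)+f'(x)(t-x)+\int_{x}^{t}(t-s)\,f''(s)\,ds,
\]
and apply $L_n^*(\cdot;x)$ to both sides. Using $L_n^*(1;x)=1$ from \eqref{1.4}, this yields
\[
L_n^*(f;x)-f(x) \;=\; f'(x)\,L_n^*(t-x;x)+L_n^*\!\left(\int_{x}^{t}(t-s)f''(s)\,ds;\,x\right).
\]
Next I would estimate each of the two terms separately. For the first term, Lemma 2 gives
\[
\bigl|L_n^*(t-x;x)\bigr|=\frac{b_n}{n}\!\left(\frac{g'(1)}{g(1)}+1\right),
\]
so $|f'(x)\,L_n^*(t-x;x)|\le \|f'\|_{C_B}\cdot \frac{b_n}{n}\bigl(\tfrac{g'(1)}{g(1)}+1\bigr)$. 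For the second term, the standard bound
\[
\left|\int_{x}^{t}(t-s)f''(s)\,ds\right|\le \frac{(t-x)^{2}}{2}\,\|f''\|_{C_B}
\]
together with positivity of $L_n^*$ and \eqref{1.8} produces
\[
\left|L_n^*\!\left(\int_{x}^{t}(t-s)f''(s)\,ds;\,x\right)\right|\le \frac{\|f''\|_{C_B}}{2}\,L_n^*((t-x)^{2};x)=\frac{\theta_n}{2}\,\|f''\|_{C_B}.
\]

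Combining these two bounds and using $\|f'\|_{C_B},\|f''\|_{C_B}\le \|f\|_{C_B^{2}}$ gives the desired inequality with
\[
\xi \;=\; \frac{b_n}{n}\!\left(\frac{g'(1)}{g(1)}+1\right)+\frac{\theta_n}{2}.
\]
I do not foresee a major obstacle: the proof is essentially a clean assembly of the Taylor remainder estimate with the moment formulas already proved in Lemma 1 and Lemma 2. The only minor subtlety is ensuring positivity of $L_n^*$ when pulling the absolute value inside; this is guaranteed by Wood's criterion $a_k/g(1)\ge 0$ assumed on the Appell polynomial coefficients. A secondary (cosmetic) point is the choice of whether to absorb the $b_n/n$ factor into $\theta_n$ in the final constant $\xi$; I would leave the two contributions separate to match the moment expressions from Lemma 2 verbatim.
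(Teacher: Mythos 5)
Your proposal is correct and follows essentially the same route as the paper: a Taylor expansion about $x$, application of $L_n^{\ast}$ using $L_n^{\ast}(1;x)=1$, and the first and second central moments from Lemma 2, followed by $\left\Vert f'\right\Vert_{C_B},\left\Vert f''\right\Vert_{C_B}\leq\left\Vert f\right\Vert_{C_B^{2}}$. The only differences are cosmetic: you use the integral form of the remainder (the paper uses the Lagrange form with $f''(\eta)$) and you keep the factor $\tfrac{1}{2}$ on the second moment, which gives a slightly sharper constant that still implies the stated bound with the paper's $\xi$.
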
%

\[
\xi:=\xi_{n}\left(  x\right)  =\left\{  \frac{g^{\prime}(1)}{g(1)}\frac{b_{n}%
}{n}+\frac{b_{n}}{n}+\frac{b_{n}^{2}}{n^{2}}\left(  \frac{2g^{\prime}%
(1)}{g(1)}+\frac{g^{\prime\prime}(1)}{g(1)}+1\right)  \right\}  .
\]

\begin{proof}
From the Taylor expansion of $f$, the linearity of the operators $L_{n}^{\ast
}$ and (\ref{1.4}), we have%
\begin{equation}
L_{n}^{\ast}(f;x)-f\left(  x\right)  =f^{\prime}\left(  x\right)  L_{n}^{\ast
}(s-x;x)+\frac{1}{2}f^{\prime\prime}\left(  \eta\right)  L_{n}^{\ast}(\left(
s-x\right)  ^{2};x),\eta\in\left(  x,s\right)  .\label{Taylor}%
\end{equation}
Since
\[
L_{n}^{\ast}(s-x;x)=\frac{g^{\prime}(1)}{g(1)}\frac{b_{n}}{n}+\frac{b_{n}}%
{n}\geq0
\]
for $s\geq x$, by considering Lemma and (\ref{Taylor}), we can write
\begin{align*}
\left\vert L_{n}^{\ast}(f;x)-f\left(  x\right)  \right\vert  & \leq\left\{
\frac{g^{\prime}(1)}{g(1)}\frac{b_{n}}{n}+\frac{b_{n}}{n}\right\}  \left\Vert
f^{\prime}\right\Vert _{C_{B}}+\left\{  \frac{b_{n}^{2}}{n^{2}}\left(
\frac{2g^{\prime}(1)}{g(1)}+\frac{g^{\prime\prime}(1)}{g(1)}+1\right)
\right\}  \left\Vert f^{^{\prime\prime}}\right\Vert _{C_{B}}\\
& \leq\left\{  \frac{g^{\prime}(1)}{g(1)}\frac{b_{n}}{n}+\frac{b_{n}}{n}%
+\frac{b_{n}^{2}}{n^{2}}\left(  \frac{2g^{\prime}(1)}{g(1)}+\frac
{g^{\prime\prime}(1)}{g(1)}+1\right)  \right\}  \left\Vert f\right\Vert
_{C_{B}^{2}}.
\end{align*}
which completes the proof.
\end{proof}

\begin{theorem}
Let $f\in C_{B}[0,\infty)$, then
\[
\left\vert L_{n}^{\ast}(f;x)-f\left(  x\right)  \right\vert \leq2M\left\{
\omega_{2}\left(  f;\sqrt{\delta}\right)  +\min\left(  1,\delta\right)
\left\Vert f\right\Vert _{C_{B}}\right\}  ,
\]
where%
\[
\delta:=\delta_{n}\left(  x\right)  =\frac{1}{2}\varsigma_{n}\left(  x\right)
\]
and $M>0$ is a constant which is independent of the functions $f$ and $\delta
$. Also, $\varsigma_{n}\left(  x\right)  $ is the same as in the Theorem 3.
\end{theorem}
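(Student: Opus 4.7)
The plan is to use the standard Peetre $K$-functional technique that converts an estimate for smooth functions into one for continuous functions via second modulus of continuity. The previous Theorem 3 provides exactly the smooth-function estimate we need.

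First I would fix $f \in C_B[0,\infty)$ and an arbitrary $g \in C_B^2[0,\infty)$, then apply the triangle inequality in the form
\[
|L_n^{\ast}(f;x)-f(x)| \le |L_n^{\ast}(f-g;x)| + |L_n^{\ast}(g;x)-g(x)| + |g(x)-f(x)|.
\]
Since $L_n^{\ast}$ is a positive linear operator with $L_n^{\ast}(1;x)=1$ by (\ref{1.4}), the first term is bounded by $\|f-g\|_{C_B}$ and the last by $\|f-g\|_{C_B}$ as well, giving a combined contribution of $2\|f-g\|_{C_B}$. For the middle term I would invoke Theorem 3 directly to obtain $|L_n^{\ast}(g;x)-g(x)| \le \xi_n(x)\|g\|_{C_B^2} = \varsigma_n(x)\|g\|_{C_B^2}$.

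Next I would combine these bounds into
\[
|L_n^{\ast}(f;x)-f(x)| \le 2\|f-g\|_{C_B} + \varsigma_n(x)\|g\|_{C_B^2} = 2\Bigl(\|f-g\|_{C_B} + \tfrac{1}{2}\varsigma_n(x)\|g\|_{C_B^2}\Bigr).
\]
Since $g \in C_B^2[0,\infty)$ was arbitrary, taking the infimum over all such $g$ and recalling the definition (\ref{Petree}) of the $K$-functional with parameter $\delta_n(x) = \tfrac{1}{2}\varsigma_n(x)$ yields
\[
|L_n^{\ast}(f;x)-f(x)| \le 2K(f;\delta_n(x)).
\]

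Finally I would apply the standard inequality (\ref{W2}) relating the $K$-functional to the second modulus of continuity, which bounds $K(f;\delta)$ by $M\{\omega_2(f;\sqrt{\delta}) + \min(1,\delta)\|f\|_{C_B}\}$ for an absolute constant $M>0$, immediately producing the desired estimate. There is no real obstacle here; the only mildly delicate point is justifying the first step that $|L_n^{\ast}(h;x)| \le \|h\|_{C_B}$ for $h = f-g$, which follows because the coefficients $a_k/g(1)$ are nonnegative (hence $L_n^{\ast}$ is positive) and $L_n^{\ast}$ reproduces constants, so no tricky estimation is required.
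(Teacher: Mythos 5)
Your proposal is correct and follows essentially the same route as the paper: triangle-inequality decomposition through an arbitrary $g\in C_{B}^{2}[0,\infty)$, the bound $2\left\Vert f-g\right\Vert _{C_{B}}+\varsigma_{n}(x)\left\Vert g\right\Vert _{C_{B}^{2}}$ via positivity, $L_{n}^{\ast}(1;x)=1$ and Theorem 3, then taking the infimum to reach $2K(f;\delta)$ and applying (\ref{W2}). No substantive differences from the paper's argument.
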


\begin{proof}
Suppose that $g\in C_{B}^{2}[0,\infty)$. From Theorem 3, we can write%
\begin{align}
\left\vert L_{n}^{\ast}(f;x)-f\left(  x\right)  \right\vert  & \leq\left\vert
L_{n}^{\ast}(f-g;x)\right\vert +\left\vert L_{n}^{\ast}(g;x)-g\left(
x\right)  \right\vert +\left\vert g(x)-f\left(  x\right)  \right\vert
\label{Esz}\\
& \leq2\left\Vert f-g\right\Vert _{C_{B}}+\varsigma\left\Vert g\right\Vert
_{C_{B}^{2}}\nonumber\\
& =2\left[  \left\Vert f-g\right\Vert _{C_{B}}+\delta\left\Vert g\right\Vert
_{C_{B}^{2}}\right] \nonumber
\end{align}
The left-hand side of inequality (\ref{Esz}) does not depend on the function
$g\in$ $C_{B}^{2}[0,\infty)$, so
\[
\left\vert L_{n}^{\ast}(f;x)-f\left(  x\right)  \right\vert \leq2K(f,\delta)
\]
holds where $K$ $\left(  f;\delta\right)  $ is Peetre's $K-$functional defined
by (\ref{Petree}) . By the relation between Peetre's K functional and the
second modulus of smoothness given by (\ref{W2}) , inequality (\ref{Esz})
becomes%
\[
\left\vert L_{n}^{\ast}(f;x)-f\left(  x\right)  \right\vert \leq2M\left\{
\omega_{2}\left(  f;\sqrt{\delta}\right)  +\min\left(  1,\delta\right)
\left\Vert f\right\Vert _{C_{B}}\right\}
\]
hence we have the result.
\end{proof}

Now let us consider the Lipschitz type space with two parameters (see
\cite{M.Ali}).%
\[
Lip_{M}^{\left(  \alpha_{1},\alpha_{2}\right)  }\left(  \alpha\right)
:=\left\{  f\in C_{B}\left[  0,\infty\right)  :\left\vert f(t)-f\left(
x\right)  \right\vert \leq M\frac{\left\vert t-x\right\vert ^{\alpha}}{\left(
t+\alpha_{1}x^{2}+\alpha_{2}x\right)  ^{\frac{\alpha}{2}}};x,t\in\left[
0,\infty\right)  \right\}
\]
for $\alpha_{1},\alpha_{2}>0,$ $M$ is a positive constant and $\alpha
\in\left[  0,1\right)  .$

\begin{theorem}
Let $f\in Lip_{M}^{\left(  \alpha_{1},\alpha_{2}\right)  }\left(
\alpha\right)  .$ For all $x>0$, we have%
\[
\left\vert L_{n}^{\ast}(f;x)-f\left(  x\right)  \right\vert \leq M\left(
\frac{\frac{b_{n}^{2}}{n^{2}}\left(  \frac{2g^{\prime}(1)}{g(1)}%
+\frac{g^{\prime\prime}(1)}{g(1)}+1\right)  }{\alpha_{1}x^{2}+\alpha_{2}%
x}\right)  ^{\frac{\alpha}{2}}%
\]

\end{theorem}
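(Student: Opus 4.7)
The plan is to exploit the two-parameter Lipschitz condition by first pulling the $x$-dependent denominator out of the operator, and then applying Hölder's inequality to reduce $L_n^*(|s-x|^\alpha;x)$ to a power of the second central moment already computed in Lemma 2.

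First, using the identity $L_n^*(1;x)=1$ from (\ref{1.4}), I would write
\[
|L_n^*(f;x)-f(x)| \leq \frac{e^{-nx}}{g(1)}\frac{n}{b_n}\sum_{k=0}^{\infty} p_k\!\left(\tfrac{n}{b_n}x\right)\int_{\frac{k}{n}b_n}^{\frac{k+1}{n}b_n} |f(s)-f(x)|\,ds .
\]
Then I would apply the Lipschitz estimate defining $Lip_M^{(\alpha_1,\alpha_2)}(\alpha)$, and use the elementary inequality
\[
(s+\alpha_1 x^2+\alpha_2 x)^{\alpha/2} \geq (\alpha_1 x^2 + \alpha_2 x)^{\alpha/2},
\]
which is valid since $s\geq 0$ in the domain of integration. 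This frees the denominator from $s$, giving
\[
|L_n^*(f;x)-f(x)| \leq \frac{M}{(\alpha_1 x^2+\alpha_2 x)^{\alpha/2}}\, L_n^*(|s-x|^\alpha;x).
\]

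Next I would handle $L_n^*(|s-x|^\alpha;x)$ by Hölder's inequality with conjugate exponents $p=2/\alpha$ and $q=2/(2-\alpha)$, which are admissible because $\alpha \in [0,1) \subset [0,2)$. Since $L_n^*$ is a positive linear operator (so Hölder applies to the integral-sum combination defining it), and since $L_n^*(1;x)=1$, this yields
\[
L_n^*(|s-x|^\alpha;x) \leq \bigl[L_n^*((s-x)^2;x)\bigr]^{\alpha/2}\,\bigl[L_n^*(1;x)\bigr]^{(2-\alpha)/2} = \bigl[L_n^*((s-x)^2;x)\bigr]^{\alpha/2}.
\]
Invoking the second central moment formula (\ref{1.8}) from Lemma 2, the bracket equals $\theta_n = \frac{b_n^2}{n^2}\bigl(\frac{2g'(1)}{g(1)}+\frac{g''(1)}{g(1)}+1\bigr)$, and combining with the previous display produces exactly the claimed inequality.

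I expect no serious obstacle; the only subtlety is the justification of Hölder's inequality applied to the compound operator $L_n^*$ (a sum over $k$ weighted by the positive quantities $p_k(\frac{n}{b_n}x)\frac{e^{-nx}}{g(1)}$, each followed by an average over $[\tfrac{k}{n}b_n,\tfrac{k+1}{n}b_n]$). Positivity of the Appell coefficients $a_k/g(1)\geq 0$ (the Wood condition cited in the introduction) ensures all weights are nonnegative, so Hölder applies termwise and then to the outer sum. The rest is routine substitution of the moment estimate.
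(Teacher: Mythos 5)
Your argument is correct, and it reaches the paper's bound by a more streamlined route than the paper itself. The paper splits the proof into two cases: for $\alpha=1$ it uses the Lipschitz estimate and then the Cauchy--Schwarz inequality $L_{n}^{\ast}(\left\vert t-x\right\vert ;x)\leq\left(  L_{n}^{\ast}((t-x)^{2};x)\right)  ^{1/2}$, while for $\alpha\in(0,1)$ it first applies H\"older with exponents $p=1/\alpha$, $q=1/(1-\alpha)$ to $\left\vert f(t)-f(x)\right\vert$ (against the constant $1$, both over the sum and over the integral), only then inserts the Lipschitz bound and the estimate $t+\alpha_{1}x^{2}+\alpha_{2}x\geq\alpha_{1}x^{2}+\alpha_{2}x$, and finally needs a second application of Cauchy--Schwarz to pass from $L_{n}^{\ast}(\left\vert t-x\right\vert ;x)$ to the second central moment. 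You instead insert the Lipschitz bound and discard $s$ from the denominator at the outset, and then make a single H\"older application with conjugate exponents $2/\alpha$ and $2/(2-\alpha)$, which (using $L_{n}^{\ast}(1;x)=1$) gives $L_{n}^{\ast}(\left\vert s-x\right\vert ^{\alpha};x)\leq\left(  L_{n}^{\ast}((s-x)^{2};x)\right)  ^{\alpha/2}$ directly; at $\alpha=1$ this is exactly Cauchy--Schwarz, so your argument covers both of the paper's cases uniformly, and your explicit remark that positivity of the weights (the Wood condition $a_{k}/g(1)\geq0$) is what licenses H\"older for the compound functional is a point the paper leaves tacit. The only cosmetic caveat is $\alpha=0$, where the exponent $2/\alpha$ is undefined; there the statement is trivial since the Lipschitz condition already gives $\left\vert f(t)-f(x)\right\vert \leq M$, and the paper does not treat that value either.
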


\begin{proof}
Let $\alpha=1.$%
\begin{align*}
\left\vert L_{n}^{\alpha,\ast}(f;x)-f\left(  x\right)  \right\vert  &
\leq\frac{e^{-nx}}{g(1)}\frac{n}{b_{n}}\sum\limits_{k=0}^{\infty}p_{k}%
(\frac{n}{b_{n}}x)\int\limits_{\frac{k}{n}b_{n}}^{\frac{k+1}{n}b_{n}%
}\left\vert f(t)-f\left(  x\right)  \right\vert dt\\
& \leq M\frac{e^{-nx}}{g(1)}\frac{n}{b_{n}}\sum\limits_{k=0}^{\infty}%
p_{k}(\frac{n}{b_{n}}x)\int\limits_{\frac{k}{n}b_{n}}^{\frac{k+1}{n}b_{n}%
}\left\vert t-x\right\vert dt\\
& \leq\frac{M}{\sqrt{\alpha_{1}x^{2}+\alpha_{2}x}}L_{n}^{\alpha,\ast
}(\left\vert t-x\right\vert ;x)\\
& \leq M\left(  \sqrt{\frac{L_{n}^{\alpha,\ast}((t-x)^{2};x)}{\alpha_{1}%
x^{2}+\alpha_{2}x}}\right) \\
& \leq M\frac{b_{n}}{n}\sqrt{\frac{\frac{2g^{\prime}(1)}{g(1)}+\frac
{g^{\prime\prime}(1)}{g(1)}+1}{\alpha_{1}x^{2}+\alpha_{2}x}}.
\end{align*}
Let $\alpha\in\left(  0,1\right)  .$By applying H\"{o}lder inequality with
$p=\frac{1}{\alpha}$ and $q=\frac{1}{1-\alpha}$%
\begin{align*}
\left\vert L_{n}^{\alpha,\ast}(f;x)-f\left(  x\right)  \right\vert  &
\leq\frac{e^{-nx}}{g(1)}\frac{n}{b_{n}}\sum\limits_{k=0}^{\infty}p_{k}%
(\frac{n}{b_{n}}x)\int\limits_{\frac{k}{n}b_{n}}^{\frac{k+1}{n}b_{n}%
}\left\vert f(t)-f\left(  x\right)  \right\vert dt\\
& \leq\left\{  \frac{e^{-nx}}{g(1)}\sum\limits_{k=0}^{\infty}p_{k}(\frac
{n}{b_{n}}x)\left(  \frac{n}{b_{n}}\int\limits_{\frac{k}{n}b_{n}}^{\frac
{k+1}{n}b_{n}}\left\vert f(t)-f\left(  x\right)  \right\vert dt\right)
^{\frac{1}{\alpha}}\right\}  ^{\alpha}\\
& \leq\left\{  \frac{n}{b_{n}}\frac{e^{-nx}}{g(1)}\sum\limits_{k=0}^{\infty
}p_{k}(\frac{n}{b_{n}}x)\int\limits_{\frac{k}{n}b_{n}}^{\frac{k+1}{n}b_{n}%
}\left\vert f(t)-f\left(  x\right)  \right\vert ^{\frac{1}{\alpha}}dt\right\}
^{\alpha}\\
& \leq M\left\{  \frac{n}{b_{n}}\frac{e^{-nx}}{g(1)}\sum\limits_{k=0}^{\infty
}p_{k}(\frac{n}{b_{n}}x)\int\limits_{\frac{k}{n}b_{n}}^{\frac{k+1}{n}b_{n}%
}\frac{\left\vert t-x\right\vert }{\sqrt{t+\alpha_{1}x^{2}+\alpha_{2}x}%
}dt\right\}  ^{\alpha}\\
& \leq\frac{M}{\left(  \alpha_{1}x^{2}+\alpha_{2}x\right)  ^{\frac{\alpha}{2}%
}}\left\{  \frac{n}{b_{n}}\frac{e^{-nx}}{g(1)}\sum\limits_{k=0}^{\infty}%
p_{k}(\frac{n}{b_{n}}x)\int\limits_{\frac{k}{n}b_{n}}^{\frac{k+1}{n}b_{n}%
}\left\vert t-x\right\vert dt\right\}  ^{\alpha}\\
& \leq\frac{M}{\left(  \alpha_{1}x^{2}+\alpha_{2}x\right)  ^{\frac{\alpha}{2}%
}}\left(  L_{n}^{\alpha,\ast}(\left\vert t-x\right\vert ;x)\right)  ^{\alpha
}\\
& \leq M\left(  \frac{L_{n}^{\alpha,\ast}((t-x)^{2};x)}{\alpha_{1}x^{2}%
+\alpha_{2}x}\right)  ^{\frac{\alpha}{2}}=M\left(  \frac{\frac{b_{n}^{2}%
}{n^{2}}\left(  \frac{2g^{\prime}(1)}{g(1)}+\frac{g^{\prime\prime}(1)}%
{g(1)}+1\right)  }{\alpha_{1}x^{2}+\alpha_{2}x}\right)  ^{\frac{\alpha}{2}}%
\end{align*}

\end{proof}

\section{\bigskip Approximation properties in weighted spaces}

Now we present the weighted spaces of functions that appear in the paper. With
this purpose we firstly introduce the function

We give approximation properties of the operators $L_{n}^{\ast}$ of the
weighted spaces of continuous functions with exponential growth on $R_{0}%
^{+}=\left[  0,\infty\right)  $ with the help of the weighted Korovkin type
theorem proved by Gadjiev in \cite{Gad 1}, \cite{Gad2}. Therefore we consider
the following weighted spaces of functions which are defined on the $R_{0}%
^{+}.$

Let $\rho(x)$ be the weight function and $M_{f}$ be a positive constant, we define%

\[
B_{\rho}\left(  R_{0}^{+}\right)  =\left\{  f\in E\left(  R_{0}^{+}\right)
:\left\vert f(x)\right\vert \leq M_{f}\rho(x)\right\}  ,
\]%
\[
C_{\rho}\left(  R_{0}^{+}\right)  =\left\{  f\in B\left(  R_{0}^{+}\right)
:f\text{ is continuous}\right\}  ,
\]%
\[
C_{\rho}^{k}\left(  R_{0}^{+}\right)  =\left\{  f\in C\left(  R_{0}%
^{+}\right)  :\underset{x\rightarrow\infty}{\lim}\frac{f(x)}{\rho(x)}%
=K_{f}<\infty\right\}  .
\]
It is obvious that $C_{\rho}^{k}\left(  R_{0}^{+}\right)  \subset C_{\rho
}\left(  R_{0}^{+}\right)  \subset B_{\rho}\left(  R_{0}^{+}\right)  $.
$B_{\rho}\left(  R_{0}^{+}\right)  $ is a linear normed space with the norm%
\[
\left\Vert f\right\Vert _{\rho}=\underset{x\in R_{0}^{+}}{\sup}\frac
{\left\vert f(x)\right\vert }{\rho(x)}.
\]

The following results on the sequence of positive linear operators in these
spaces are given in \cite{Gad 1}, \cite{Gad2}.

\begin{lemma}
The sequence of positive linear operators $\left(  L_{n}\right)  _{n\geq1}$
which act from $C_{\rho}\left(  R_{0}^{+}\right)  $ to $B_{\rho}\left(
R_{0}^{+}\right)  $ if and only if there exists a positive constant $k$ such
that $L_{n}(\rho;x)\leq k\rho(x),$ $i.e.$ $\left\Vert L_{n}(\rho;x)\right\Vert
_{\rho}\leq k$.
\end{lemma}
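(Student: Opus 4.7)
The plan is to establish the iff characterization directly from the definition of the weighted norm, the positivity and linearity of the operators, and the elementary inequality $|L_n(f;x)|\leq L_n(|f|;x)$ that any positive linear operator satisfies.

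For the sufficiency direction, I would start with an arbitrary $f\in C_\rho(R_0^+)$ and exploit the pointwise bound $|f(x)|\leq \|f\|_\rho\,\rho(x)$, which holds for every $x\in R_0^+$ by definition of the weighted norm. Applying $L_n$ to both sides, using positivity and linearity, and then invoking the hypothesis $L_n(\rho;x)\leq k\rho(x)$, I obtain
\[
|L_n(f;x)|\;\leq\; L_n(|f|;x)\;\leq\; \|f\|_\rho\, L_n(\rho;x)\;\leq\; k\,\|f\|_\rho\,\rho(x).
\]
Dividing by $\rho(x)$ and taking the supremum over $x\in R_0^+$ yields $\|L_n f\|_\rho\leq k\|f\|_\rho$, which shows $L_n(f)\in B_\rho(R_0^+)$ and thus the operator acts from $C_\rho(R_0^+)$ into $B_\rho(R_0^+)$.

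For the necessity direction, the key observation is that $\rho$ itself lies in $C_\rho(R_0^+)$ with $M_\rho=1$, since $|\rho(x)|\leq 1\cdot \rho(x)$ trivially and $\rho$ is assumed continuous on $R_0^+$. Assuming the sequence $(L_n)$ acts from $C_\rho$ into $B_\rho$, the image $L_n(\rho)$ belongs to $B_\rho$, so by definition of the $\rho$-norm there is a constant $k$ such that $L_n(\rho;x)\leq k\rho(x)$ for every $x$; this is exactly $\|L_n(\rho;x)\|_\rho\leq k$.

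The main subtlety I expect is the uniformity of the constant $k$ in $n$: the naive argument in the necessity part gives a constant $k_n=\|L_n(\rho)\|_\rho$ that a priori depends on $n$, while the statement asks for a single $k$ that works for the entire sequence. This can be addressed either by appealing to the uniform boundedness principle applied to the sequence of bounded positive linear operators (each $L_n$ is bounded on the Banach space $C_\rho$ by the sufficiency direction), or by simply interpreting the lemma as saying that each individual $L_n$ maps $C_\rho$ to $B_\rho$ iff such a bound exists for that $n$. In either reading the core one-line computation using $|f|\leq \|f\|_\rho\rho$ together with positivity remains the only real content of the proof.
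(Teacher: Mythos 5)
The paper gives no proof of this lemma at all: it is quoted verbatim as a known result of Gadjiev (\cite{Gad 1}, \cite{Gad2}), so there is no argument of the authors to compare yours with. Your two-directional argument is the standard proof of Gadjiev's lemma and its core is correct: sufficiency follows from $|f(x)|\leq\Vert f\Vert_{\rho}\,\rho(x)$, positivity and linearity of $L_{n}$, and the hypothesis $L_{n}(\rho;x)\leq k\rho(x)$, giving $\Vert L_{n}f\Vert_{\rho}\leq k\Vert f\Vert_{\rho}<\infty$; necessity follows by testing on $f=\rho\in C_{\rho}(R_{0}^{+})$, where positivity also guarantees $L_{n}(\rho;x)\geq0$, so that $\Vert L_{n}\rho\Vert_{\rho}\leq k$ really is equivalent to the pointwise inequality.

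The one point to fix is your proposed repair of the $n$-uniformity via the uniform boundedness principle: it does not apply as you describe. The hypothesis that the sequence acts from $C_{\rho}(R_{0}^{+})$ into $B_{\rho}(R_{0}^{+})$ only says that $\Vert L_{n}f\Vert_{\rho}<\infty$ for each fixed $n$ and $f$; it does not give the pointwise-in-$f$ bound $\sup_{n}\Vert L_{n}f\Vert_{\rho}<\infty$ that the Banach--Steinhaus theorem requires, so you cannot extract a single $k$ valid for all $n$ this way. The correct resolution is your second alternative, which is also how the result is stated in Gadjiev's original papers: the characterization is per operator, with a constant $k_{n}=\Vert L_{n}\rho\Vert_{\rho}$ allowed to depend on $n$; the single-$k$ phrasing in the present paper is simply a loose restatement (and in the application, Lemma on $L_{n}^{\ast}(\rho;x)$ with $\rho(x)=1+x^{2}$, a uniform constant is obtained separately from the explicit moment computation and the condition $\lim_{n\rightarrow\infty}b_{n}/n=0$). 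With that reading made explicit, your proof is complete.
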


\begin{theorem}
Let $\left(  L_{n}\right)  _{n\geq1}$ be the space of positive linear
operators which act from $C_{\rho}\left(  R_{0}^{+}\right)  $ to $B_{\rho
}\left(  R_{0}^{+}\right)  $ satisfying the conditions%
\[
\underset{n\rightarrow\infty}{\lim}\left\Vert L_{n}(t^{v};x)-x^{v}\right\Vert
_{\rho}=0,\text{ }v=0,1,2,
\]
then for any function $f\in C_{\rho}^{k}\left(  R_{0}^{+}\right)  $%
\[
\underset{n\rightarrow\infty}{\lim}\left\Vert L_{n}f-f\right\Vert _{\rho}=0.
\]

\end{theorem}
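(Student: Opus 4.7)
The plan is to reduce the weighted statement to the classical Korovkin theorem on a compact interval, coupled with a tail estimate that exploits the hypothesis $f/\rho\to K_{f}$ at infinity. Fix $\varepsilon>0$. Because $f\in C_{\rho}^{k}(R_{0}^{+})$, I can first choose $A>0$ large enough that $|f(x)/\rho(x)-K_{f}|<\varepsilon$ for all $x>A$. I then split the weighted supremum
\[
\|L_{n}f-f\|_{\rho}=\sup_{0\le x\le A}\frac{|L_{n}(f;x)-f(x)|}{\rho(x)}+\sup_{x>A}\frac{|L_{n}(f;x)-f(x)|}{\rho(x)}
\]
and bound the two pieces separately.

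For the compact piece $x\in[0,A]$, the weight $\rho$ is bounded below by a positive constant, so it suffices to show $L_{n}f\to f$ uniformly on $[0,A]$. The hypothesis $\|L_{n}(t^{v};x)-x^{v}\|_{\rho}\to0$ for $v=0,1,2$ forces $L_{n}(t^{v};x)\to x^{v}$ uniformly on $[0,A]$, so the classical Korovkin theorem on $C[0,A]$ applied to the restriction of $f$ (which is continuous, hence uniformly continuous on $[0,A]$) yields $\sup_{x\in[0,A]}|L_{n}(f;x)-f(x)|<\varepsilon$ for all $n$ large.

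The tail is the delicate part. Writing $f(x)=K_{f}\rho(x)+r(x)$ with $|r(x)|\le\varepsilon\rho(x)$ for $x>A$, and extending $r$ suitably so that $|r|\le M\rho$ globally with a small constant near infinity, I estimate
\[
\frac{|L_{n}(f;x)-f(x)|}{\rho(x)}\le|K_{f}|\,\frac{|L_{n}(\rho;x)-\rho(x)|}{\rho(x)}+\frac{|L_{n}(r;x)|}{\rho(x)}+\frac{|r(x)|}{\rho(x)}.
\]
The first term tends to $0$ in $\|\cdot\|_{\rho}$ because $L_{n}(\rho;x)-\rho(x)$ is a linear combination of $L_{n}(t^{v};x)-x^{v}$ for $v=0,1,2$, each of which converges to zero in the weighted norm by hypothesis. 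The third term is at most $\varepsilon$ by the choice of $A$. The main obstacle is the middle term: I need uniform control of $L_{n}(r;x)/\rho(x)$ for $x>A$. This is handled by observing that the convergence $\|L_{n}\rho-\rho\|_{\rho}\to0$ implies $L_{n}(\rho;x)\le C\rho(x)$ uniformly in $n$ and $x$ (by Lemma applied with the constant $k$), so by positivity $|L_{n}(r;x)|\le L_{n}(|r|;x)\le(\varepsilon+O(1)\,\mathbf{1}_{[0,A]})L_{n}(\rho;x)$, and the indicator contribution vanishes in the tail while the $\varepsilon$ part contributes at most $C\varepsilon$.

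Combining the three estimates gives $\|L_{n}f-f\|_{\rho}\le(C+2)\varepsilon$ for all $n$ sufficiently large, and since $\varepsilon$ was arbitrary, the conclusion $\|L_{n}f-f\|_{\rho}\to 0$ follows. The decisive ingredients are the norm boundedness $\|L_{n}\|_{C_{\rho}\to B_{\rho}}\le k$ guaranteed by Lemma together with the moment convergence in the weighted norm, which together replace the role played by uniform continuity on compacta in the classical Korovkin argument.
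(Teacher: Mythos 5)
The paper never proves this statement: it is Gadjiev's weighted Korovkin theorem, quoted from \cite{Gad 1}, \cite{Gad2}, so your attempt has to be measured against the standard proof of that result. Your two--region strategy (compact part plus tail, using $f/\rho\to K_f$) is indeed the right skeleton, but two steps are genuinely unjustified. First, on $[0,A]$ you invoke ``the classical Korovkin theorem on $C[0,A]$ applied to the restriction of $f$''. That theorem concerns positive operators acting on $C[0,A]$, whereas here $L_n(f;x)$ for $x\in[0,A]$ depends on the values of $f$ on all of $[0,\infty)$ (for the operators of the paper it is an integral over the whole half--line), so the restriction argument does not apply as a black box. The standard repair is a direct estimate: split $|f(t)-f(x)|$ according to $|t-x|\le\delta$ or $|t-x|>\delta$, use uniform continuity on $[0,A+\delta]$ in the first case and $|f(t)-f(x)|\le M_f(2+x^2+t^2)\le C_{A,\delta}\,(t-x)^2$ in the second, and then use that the three weighted moment conditions force $\sup_{x\in[0,A]}L_n((t-x)^2;x)\to0$. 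This is fixable, but it is exactly the content that ``apply Korovkin'' is hiding.

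Second, and more seriously, in the tail you assert that ``the indicator contribution vanishes in the tail''. There is no reason for $L_n\bigl(\chi_{[0,A]}\,|r|\,;x\bigr)/\rho(x)$ to be small for $x>A$: the indicator lives in the integration variable $t$, not in $x$, and your own bound only gives $L_n(\chi_{[0,A]}|r|;x)\le \bigl(\sup_{t\le A}|r(t)|\bigr)L_n(1;x)$, which for $x$ just above $A$ is a fixed constant of order $\sup_{t\le A}|f(t)-K_f\rho(t)|$, not $o(1)$ and not $O(\varepsilon)$. Closing this gap is precisely the delicate point of Gadjiev's argument: one uses the Chebyshev--type inequality $\chi_{[0,A]}(t)\le (t-x)^2/(x-A)^2$ for $x>A$ (so one must also push the compact region out to $[0,A+1]$, say, to keep $x$ away from $A$), combined with the bound $L_n((t-x)^2;x)\le \delta_n\,(1+x)^2(1+x^2)$, $\delta_n\to0$, which follows from expanding the central moment in terms of $L_n(t^v;x)-x^v$, $v=0,1,2$, and the weighted norm hypotheses; this yields $\sup_{x\ge A+1}L_n(\chi_{[0,A]}|r|;x)/\rho(x)\le C_A\,\delta_n\to0$. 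Without this localization step your tail estimate does not close, so as written the proof has a genuine gap, even though the decomposition $f=K_f\rho+r$ and the use of the operator bound $L_n\rho\le C\rho$ are sound ingredients of the correct argument.
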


\bigskip

\begin{lemma}
Let $\rho(x)=1+x^{2}$ be a weight function. If $f\in C_{\rho}\left(  R_{0}%
^{+}\right)  $, then%
\[
\left\Vert L_{n}^{\ast}(\rho;x)\right\Vert _{\rho}\leq1+M.
\]

\end{lemma}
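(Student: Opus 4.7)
The plan is to compute $L_n^\ast(\rho;x)$ explicitly using the linearity of the operator together with the moment formulas (\ref{1.4}) and (\ref{1.7}), and then bound the quotient $L_n^\ast(\rho;x)/\rho(x)$ uniformly in $x\in R_0^+$ using elementary estimates.

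First I would write
\[
L_n^\ast(\rho;x)=L_n^\ast(1;x)+L_n^\ast(t^2;x)=1+x^2+\tfrac{b_n}{n}\,\alpha_n\,x+\tfrac{b_n^2}{n^2}\,\beta_n,
\]
where $\alpha_n=\frac{g(1)+2g'(1)}{g(1)}+1$ and $\beta_n=\frac{2g'(1)}{g(1)}+\frac{g''(1)}{g(1)}+\frac{1}{3}$ come directly from (\ref{1.7}). Note that $\alpha_n,\beta_n$ do not actually depend on $n$; they are fixed constants determined by $g$.

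Next I would divide by $\rho(x)=1+x^2$ to obtain
\[
\frac{L_n^\ast(\rho;x)}{\rho(x)}=1+\frac{b_n}{n}\,\alpha_n\,\frac{x}{1+x^2}+\frac{b_n^2}{n^2}\,\frac{\beta_n}{1+x^2}.
\]
Using the trivial inequalities $\frac{x}{1+x^2}\le\frac{1}{2}$ and $\frac{1}{1+x^2}\le 1$ valid on $R_0^+$, the supremum over $x$ is bounded by $1+\frac{b_n}{2n}|\alpha_n|+\frac{b_n^2}{n^2}|\beta_n|$. Since by (\ref{limbn}) the sequences $b_n/n$ and $b_n^2/n^2$ tend to $0$ and are therefore bounded, there exists a constant $M>0$, independent of $n$, such that $\tfrac{b_n}{2n}|\alpha_n|+\tfrac{b_n^2}{n^2}|\beta_n|\le M$. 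Taking the supremum over $x\in R_0^+$ yields $\|L_n^\ast(\rho;x)\|_\rho\le 1+M$, which is the claim.

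There is no real obstacle here; the only point requiring care is to verify that the coefficients of the $x$-linear and constant terms of $L_n^\ast(\rho;x)$ are dominated by constants that are uniform in both $n$ and $x$ after normalization by $\rho$, which follows at once from the two elementary fractional bounds above and the convergence $b_n/n\to 0$.
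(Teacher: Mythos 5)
Your proposal is correct and follows essentially the same route as the paper: compute $L_n^\ast(\rho;x)$ from (\ref{1.4}) and (\ref{1.7}), divide by $1+x^{2}$, bound the $x$-dependent terms by constants (the paper implicitly uses $\tfrac{x}{1+x^{2}}\le 1$ where you use the sharper $\tfrac12$), and invoke the boundedness of $b_n/n$ from (\ref{limbn}) to produce $M$. No gaps; the argument matches the paper's proof.
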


\begin{proof}
Using (\ref{1.4}) and (\ref{1.5}), we have%
\[
L_{n}^{\ast}(\rho;x)=1+x^{2}+\frac{b_{n}}{n}x\left(  \frac{g(1)+2g^{\prime
}(1)}{g(1)}+1\right)  +\frac{2b_{n}^{2}}{n^{2}}\frac{g^{\prime}(1)}%
{g(1)}+\frac{b_{n}^{2}}{n^{2}}\frac{g^{\prime\prime}(1)}{g(1)}+\frac{b_{n}%
^{2}}{3n^{2}}%
\]%
\begin{align*}
\left\Vert L_{n}^{\ast}(\rho;x)\right\Vert _{\rho}  & =\underset{x\in
R_{0}^{+}}{\sup}\frac{1}{1+x^{2}}\left[  1+x^{2}+\frac{b_{n}}{n}x\left(
\frac{g(1)+2g^{\prime}(1)}{g(1)}+1\right)  +\frac{2b_{n}^{2}}{n^{2}}%
\frac{g^{\prime}(1)}{g(1)}+\frac{b_{n}^{2}}{n^{2}}\frac{g^{\prime\prime}%
(1)}{g(1)}+\frac{b_{n}^{2}}{3n^{2}}\right] \\
& \leq1+\frac{b_{n}}{n}\left(  \frac{g(1)+2g^{\prime}(1)}{g(1)}+1\right)
+\frac{2b_{n}^{2}}{n^{2}}\frac{g^{\prime}(1)}{g(1)}+\frac{b_{n}^{2}}{n^{2}%
}\frac{g^{\prime\prime}(1)}{g(1)}+\frac{b_{n}^{2}}{3n^{2}}%
\end{align*}
since $\underset{n\rightarrow\infty}{\lim}\frac{b_{n}}{n}=0$, there exists a
positive constant $M$ such that%
\[
\left\Vert L_{n}^{\ast}(\rho;x)\right\Vert _{\rho}\leq1+M.
\]

\end{proof}

By using Lemma 3, we can easily see that the operators $L_{n}^{\ast}$ defined
by (\ref{1.3}) act from $C_{\rho}\left(  R_{0}^{+}\right)  $ to $B_{\rho
}\left(  R_{0}^{+}\right)  $.

\begin{theorem}
Let $L_{n}^{\ast}$ be the sequence of linear positive operators defined by
(\ref{1.3}) and $\rho(x)=1+x^{2}$, then for each $f\in C_{\rho}^{k}\left(
R_{0}^{+}\right)  $%
\[
\underset{n\rightarrow\infty}{\lim}\left\Vert L_{n}^{\ast}(\rho
;x)-f(x)\right\Vert _{\rho}=0.
\]

\begin{proof}
It is enough to show that the conditions of the weighted Korovkin type theorem
given by Theorem 6. From (\ref{1.4}), we can write%
\begin{equation}
\underset{n\rightarrow\infty}{\lim}\left\Vert L_{n}^{\ast}(1;x)-1\right\Vert
_{\rho}=0.\label{2.1}%
\end{equation}
Using (\ref{1.5}), we have%
\[
\left\Vert L_{n}^{\ast}(e_{1};x)-e_{1}(x)\right\Vert _{\rho}=\frac{b_{n}}%
{n}\frac{g^{\prime}(1)}{g(1)}+\frac{b_{n}}{n}%
\]
this implies that%
\begin{equation}
\underset{n\rightarrow\infty}{\lim}\left\Vert L_{n}^{\ast}(e_{1}%
;x)-e_{1}(x)\right\Vert _{\rho}=0.\label{2.2}%
\end{equation}
From (\ref{1.7}),%
\begin{align*}
\left\Vert L_{n}^{\ast}(e_{2};x)-e_{2}(x)\right\Vert _{\rho}  &
=\underset{x\in R_{0}^{+}}{\sup}\frac{1}{1+x^{2}}\left\{  \frac{b_{n}}%
{n}x\left(  \frac{g(1)+2g^{\prime}(1)}{g(1)}+1\right)  +\frac{b_{n}^{2}}%
{n^{2}}\left(  \frac{2g^{\prime}(1)}{g(1)}+\frac{g^{\prime\prime}(1)}%
{g(1)}+\frac{1}{3}\right)  \right\} \\
& \leq\frac{b_{n}}{n}\left(  \frac{g(1)+2g^{\prime}(1)}{g(1)}+1\right)
+\frac{b_{n}^{2}}{n^{2}}\left(  \frac{2g^{\prime}(1)}{g(1)}+\frac
{g^{\prime\prime}(1)}{g(1)}+\frac{1}{3}\right)  .
\end{align*}
Using the conditions (\ref{limbn}), it follows that%
\begin{equation}
\underset{n\rightarrow\infty}{\lim}\left\Vert L_{n}^{\ast}(e_{2}%
;x)-e_{2}(x)\right\Vert _{\rho}=0.\label{2.3}%
\end{equation}
From (\ref{2.1}), (\ref{2.2}) and (\ref{2.3}) for $v=0,1,2,$ we have%
\[
\underset{n\rightarrow\infty}{\lim}\left\Vert L_{n}^{\ast}(e_{v}%
;x)-e_{v}(x)\right\Vert _{\rho}=0.
\]
If we apply Theorem 6, we obtain desired result.
\end{proof}
\end{theorem}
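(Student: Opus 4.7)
The plan is to verify the hypotheses of the weighted Korovkin theorem (Theorem 6) and then invoke it directly. Since Lemma 3 has already established that $L_n^{\ast}$ acts from $C_{\rho}(R_0^+)$ into $B_{\rho}(R_0^+)$ for $\rho(x)=1+x^2$, the only thing left to check is the three moment conditions
\[
\lim_{n\to\infty}\bigl\|L_n^{\ast}(e_v;x)-e_v(x)\bigr\|_{\rho}=0,\qquad v=0,1,2.
\]

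First I would dispose of the cases $v=0,1$. The case $v=0$ is immediate from equation \eqref{1.4}, since $L_n^{\ast}(1;x)-1\equiv 0$. For $v=1$, equation \eqref{1.5} gives $L_n^{\ast}(t;x)-x = \frac{b_n}{n}\bigl(\tfrac{g'(1)}{g(1)}+1\bigr)$, which is a constant in $x$; dividing by $\rho(x)\geq 1$ and using $\lim_{n\to\infty}\frac{b_n}{n}=0$ from \eqref{limbn} gives the required limit.

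The case $v=2$ is the only place where the weight really matters. Using \eqref{1.7},
\[
L_n^{\ast}(t^2;x)-x^2 = \frac{b_n}{n}x\left(\frac{g(1)+2g'(1)}{g(1)}+1\right) + \frac{b_n^2}{n^2}\left(\frac{2g'(1)}{g(1)}+\frac{g''(1)}{g(1)}+\frac{1}{3}\right).
\]
Taking the supremum of this over $x\in R_0^+$ against the weight $1+x^2$, I would use the elementary bound $\frac{x}{1+x^2}\leq\frac12\leq 1$ so that both terms are controlled by constants multiplied by $\frac{b_n}{n}$ and $\frac{b_n^2}{n^2}$ respectively. Since both tend to zero by \eqref{limbn}, the norm vanishes in the limit.

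With all three moment conditions verified and the boundedness from $C_\rho$ to $B_\rho$ secured by Lemma 3, I would close the proof by a direct invocation of Theorem 6, which then yields $\|L_n^{\ast}f-f\|_{\rho}\to 0$ for every $f\in C_\rho^{k}(R_0^+)$. There is no genuine obstacle here; the argument is a mechanical assembly of the moment identities from Lemma 1, the weight bound $x/(1+x^2)\leq 1/2$, and the decay hypothesis \eqref{limbn}, followed by citing the Korovkin-type result.
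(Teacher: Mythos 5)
Your proposal is correct and follows essentially the same route as the paper: verify the Korovkin conditions for $e_0,e_1,e_2$ using \eqref{1.4}, \eqref{1.5}, \eqref{1.7}, bound the $x$-dependent term via $x/(1+x^2)$ being bounded, use \eqref{limbn}, and invoke the weighted Korovkin theorem (Theorem 6) together with the mapping property from Lemma 3. No substantive differences beyond the slightly sharper constant $1/2$ in the elementary bound.
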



\begin{thebibliography}{99}                                                                                               %
\bibitem {jaki}A. Jakimovski, D. Leviatan, Generalized Sz\'{a}sz operators for
the approximation in the finite interval, Mathematica (Cluj) 34 (1969) 97-103.

\bibitem {C1}A. Ciupa, On the approximation by Favard- Sz\'{a}sz type
operators, Rev. Anal. Numer. Theor. Approx. 25 (1996), 57-61.

\bibitem {C2}A. Ciupa, Modified Jakimovski -Leviatan operators, Creative
Math.\&Inf. ,16 (2007), 13-19.

\bibitem {Gad 1}A. D. Gadjiev, The convergence problem for a sequence of
positive linear operators on bounded sets and theorems analogous to that of
P.P. Korovkin ,Dokl. Akad. Nauk SSSR 218 (5) (1974); Transl.in soviet
Math.Dokl. 15 (5) (1974) 1433-1436.

\bibitem {Gad2}A. D. Gadjiev, On P.P. Korovkin type theorems. Math.Zametki 20
(1976)781-786; Transl. in Math. Notes (5-6) (1978) 995-998.

\bibitem {wood}B. Wood, Generalized Sz\'{a}sz operators for the approximation
in the complex domain,SIAM j.Appl.Math. 17 (4) (1969) 790-801.

\bibitem {Cheney}E.W. Cheney, A. Sharma, Bernstein power series, Canad. J.
Math. 16 (1964) 241-252.

\bibitem {didem}D. Ayd\i n Ar\i, Approximation Properties Of Szasz Type
Operators Involving Charlier Polynomials\textquotedblright, Filomat, in press.

\bibitem {Alto}F. Altomare and M. Campiti, Korovkin-type Approximation Theory
and Its Applications, de Gruyter (New York, 1994).

\bibitem {gulen}G. Ba\c{s}canbaz Tunca and Y. Tuncer, On a Chlodovsky variant
of a multivariate beta operator,Journal of Computational and Applied
Mathematics, Volume 235, Issue 16, 15 June 2011, 4816-4824.

\bibitem {chlo}I. Chlodowsky, Sur le d\'{e}veloppement des fonctions
d\'{e}finies dans un intervalle infini en s\'{e}ries de polynomes de M. S.
Bernstein, \textit{Compositio Math.}, \textbf{4 }(1937), 380-393.

\bibitem {ibrahim}\.{I}. B\"{u}y\"{u}kyaz\i c\i, H. Tanberkan, S. Serenbay K\i
rc\i\ and \c{C}. Atakut, Approximation by Chlodowsky type Jakimovski-Leviatan
operators, Journal of Computational and Appl. Mathematics, 259 (2014) 153-163.

\bibitem {M.Ali}M. A. \"{O}zarslan and H. Aktu\u{g}lu, Local approximation for
certain King type operators, Filomat 27 (1), 173-181.

\bibitem {Mkz}W. Meyer-K\"{o}nig, K. Zeller, Bernsteinsche Potenzreihen
(German), Studia Math. 19 (1960) 89-94.

\bibitem {nurh}N. \.{I}spir, Weighted approximation by modified Favard-
Sz\'{a}sz type operators, Intern. Math. Jour., Vol.3, No.10 (2003), 1053-1060.

\bibitem {agar}P.N. Agrawal, V. Gupta, A. Sathish Kumar, A. Kajla, Generalized
Baskakov-Sz\'{a}sz type operators, Applied Mathematics and Computation, Volume
236, 1 June 2014, Pa 311-324.

\bibitem {serhan}S. Varma and F. Ta\c{s}delen, Sz\'{a}sz type operators
involving Charlier polynomials, Mathematical and Computer Modelling, Volume
56, Issues 5--6, September 2012, 118-122.

\bibitem {Abel}U. Abel and M. Ivan, Asimptotic expansion of the Jakimovski
-Leviatan operators and their derivatives, Functions, series, operators,
Budapest, (1999),103-119.
\end{thebibliography}
\end{document}